\newtheorem{thm}{Theorem}[section]
\newtheorem{lem}[thm]{Lemma}
\theoremstyle{definition} 
\newtheorem{rem}[thm]{Remark}
\newtheorem{ex}[thm]{Example}
\theoremstyle{remark}
\numberwithin{equation}{section}
\def\Sha{\text{\rm Sha}}
\def\Art{\text{\rm Art}}
\def\Sha{\text{\rm Sha}}
\def\mod{\text{\rm mod}}
\def\-rig{\text{\rm -rig}}
\def\-log{\text{\rm -log}}
\def\-dif{\text{\rm -dif}}
\def\rank{\text{\rm rank}}
\def\ord{\text{\rm ord}}
\def\Aut{\text{\rm Aut}}
\def\Gal{\text{\rm Gal}}
\def\Hom{\text{\rm Hom}}
\def\Ker{\text{\rm Ker}\,}
\def\ab{\text{\rm ab}}
\def\Frob{\text{\rm Frob}}
\def\End{\text{\rm End}}
\def\Sel{\text{\rm Sel}}
\def\lim{\text{\rm lim}}
\def\rnum#1{\expandafter{\romannumeral #1}} 
\def\Rnum#1{\uppercase\expandafter{\romannumeral #1}} 
\def\widebar{\accentset{{\cc@style\underline{\mskip10mu}}}}
\def\Widebar{\accentset{{\cc@style\underline{\mskip8mu}}}}
\begin{document}

\title[ BSD conjecture in the CM case and the congruent number problem]
{Birch and Swinnerton-Dyer conjecture in the complex multiplication case and the congruent number problem}

\author[Kazuma Morita]{Kazuma Morita}

\subjclass{ %2000 MSC number
11F03, 11G05, 11G40}
\keywords{ %key words and phrases
classical number theory, elliptic curves}
\date{\today}

\maketitle

{\bf Abstract.}  For an elliptic curve $E$ over $K$, the Birch and Swinnerton-Dyer conjecture predicts that the rank of Mordell-Weil group $E(K)$ is equal to the order of the zero of $L(E_{\slash K},s)$ at $s=1$.
In this paper, we shall give a proof for elliptic curves with complex multiplications. The key method of the proof  is to reduce the Galois action of infinite order on the Tate module of an elliptic curve to that of finite order by using the $p$-adic Hodge theory. As a corollary, we can determine whether a given natural number is a congruent number (congruent number problem). This problem is one of the oldest unsolved problems in mathematics.

\section{Introduction}

A natural number $N$ is called a congruent number if it is the area of a rational right-angled triangle. To determine whether $N$ is a congruent number is a difficult problem which can be traced back at least a millennium and is the oldest major unsolved problem in number theory. For example, it is easy to verify that $6$ is a congruent number since the sides lengths $(3,4,5)$ give the right-angled triangle whose area is $6$. Furthermore, $5$ and $7$ are shown to be congruent numbers by Fibonacci and Euler. On the other hand, Fermat showed that the square numbers are never congruent numbers and,  based on this experience, led to the famous Fermat's conjecture. In modern language, a natural number $N$ is a congruent number if and only if there exists a rational point $(x,y)$ with $y\not=0$ on the elliptic curve $y^2=x^3-N^2x$. The Birch and Swinnerton-Dyer conjecture which is one of the millennium problems by Clay Mathematics Institute concerns the rational points on the elliptic curves. The substantial developments are given by Coates-Wiles, Gross-Zagier, Kolyvagin, Rubin, and so on.  In this paper, we shall give a proof for elliptic curves with complex multiplications which 
include the elliptic curves of the form  $y^2=x^3-N^2x$ and, as a corollary, can determine whether $N$ is a congruent number by the criterion of Tunnell.

{\bf Organization.} Let $E/K$ be an elliptic curve with complex multiplication over an imaginary quadratic field $K$. Choose an algebraic closure $\overline{K}$ of $K$ and consider the absolute Galois group $G_{K}=\Gal(\overline{K}/K)$. For a prime $p$, consider the Tate module $\mathbb{V}_p(E)$ over $\mathbb{Q}_{p}$ and put 
$V_{p}(E)=\mathbb{V}_{p}(E)$ if $p$ splits in $K$ (resp. $V_{p}(E)=\mathbb{V}_{p}(E)\otimes_{\mathbb{Q}_p}K_p$ otherwise).
Then, by the theory of Lubin-Tate, we have a splitting $ V_{p}(E)=V^{(1)}\oplus V^{(2)}$ of $G_{K}$-modules where $V^{(i)}$ is a $1$-dimensional vector space. In Section 3.2, 
we show that $V^{(i)}$ has the Hodge-Tate weight $(0,0)$ or $(1,1)$ in the sense of Section 2. Note that, contrary to the Hodge weights over $\mathbb{C}$, the action of $\Gal(K/\mathbb{Q})$ (and its restriction $\Gal(K_p/\mathbb{Q}_p)$) does not exchange the Hodge-Tate weights (see Remark 3.2 and 3.4). The key method of the proof of the main theorem is to reduce the Galois action of infinite order on the Tate module $V_{p}(E)$ to that of finite order by using the fact that $V^{(i)}$ has the Hodge-Tate weight $(0,0)$ or $(1,1)$ (see Section 2 for details).  This means that the proof of the main theorem is reduced to the methods in the algebraic number theory.

\section{Algebraic Hecke character and Algebraic Galois character}
In this section, we shall review the basic facts on the class field theory based on [Y]. Let $F$ be a number field and $\mathbb{A}_{F}$ denote the 
adele ring of $F$. For each place $v$ of $F$, consider its completion $F_{v}$. If $v$ is an infinite place of $F$, denote the connected component 
of identity of $F_{v}^{\times}$ by 
$F_{v}^{\times 0}$ and define $F_{\infty}^{\times 0}=\prod_{v\mid \infty}F_{v}^{\times 0}$. Then, the global class field 
theory claims that the global Artin map $\Art_{F}$ induces the isomorphism 
$$\Art_{F}:\mathbb{A}_{F}^{\times}/\overline{F^{\times}F_{\infty}^{\times 0}}\simeq G_{F}^{\ab}$$
where $-$ denotes the closure in $\mathbb{A}_{F}^{\times}$. Since the kernel of a continuous character of $\mathbb{A}_{F}^{\times}/F^{\times}$ is a closed set in $\mathbb{A}_{F}^{\times}/F^{\times}$, we have  a one-to-one correspondence
$$\{\text{Galois character}\ R:G_{F}\rightarrow \mathbb{C}^{\times}\}\leftrightarrow \{\text{Hecke character}\ \Pi:\mathbb{A}_{F}^{\times}/F^{\times}\rightarrow \mathbb{C}^{\times}, \Pi_{\mid F_{\infty}^{\times 0}}=1\}$$
where this correspondence is given by $\Pi=R\circ \Art_{F}$. Now, let us consider a $p$-adic Hecke character $\Pi:\mathbb{A}_{F}^{\times}/F^{\times}\rightarrow \overline{\mathbb{Q}_{p}}^{\times}$. Since $\overline{\mathbb{Q}_{p}}^{\times}$ is totally disconnected, the connected component of the identity on the left hand side is contained in $\Ker (\Pi)$ and we obtain $\Pi_{\mid F_{\infty}^{\times 0}}=1$. By the same method above, we have a one-to-one correspondence
$$\{\text{Galois character}\ R:G_{F}\rightarrow \overline{\mathbb{Q}_{p}}^{\times}\}\leftrightarrow \{\text{Hecke character}\ \Pi:\mathbb{A}_{F}^{\times}/F^{\times}\rightarrow \overline{\mathbb{Q}_{p}}^{\times} \}.$$
\subsection{Algebraic Hecke character} 
Let $\Pi:\mathbb{A}_{F}^{\times}/F^{\times}\rightarrow \mathbb{C}^{\times}$ be a Hecke character and denote its restriction to $F_{v}^{\times}$ by $\Pi_{v}$. For an infinite place $v$ of $F$, we say that $\Pi_{v}$ is  algebraic if there exist integers $a_{\tau}\in\mathbb{Z}$ such that we have
$$\Pi_{v}(x_{v})=\prod_{\tau\in\Hom_{\mathbb{R}}(F_{v},\mathbb{C})}\tau(x_{v})^{a_{\tau}}\quad (\forall x_{v}\in F_{v}^{\times 0}).$$
Furthermore, if $\Pi_{v}$ is algebraic for all infinite places $v$ in $F$, we say that $\Pi$ is the {\bf algebraic Hecke character} (different from the usual definition) and that the set of 
integers $(a_{\tau})_{\tau}$ (the number of this set is $[F:\mathbb{Q}]$) is the weights of $\Pi$. For an algebraic Hecke character $\Pi$ of weights 
$(a_{\tau})_{\tau}$ and a field isomorphism $\iota:\overline{\mathbb{Q}}_{p}\simeq \mathbb{C}$, define the $p$-adic Hecke character $\Pi_{\iota}$ by 
$$\Pi_{\iota}: x \ (\in \mathbb{A}^{\times}_{F})\mapsto \iota^{-1}\Bigl(\Pi(x)\cdot  \prod_{\tau\in\Hom_{\mathbb{Q}}(F,\mathbb{C})}\tau(x_{v(\tau)})^{-a_{\tau}}\Bigr)\cdot \prod_{\tau\in\Hom_{\mathbb{Q}}(F,\overline{\mathbb{Q}_{p}})}\tau(x_{v(\tau)})^{a_{\iota\circ \tau}} (\in \overline{\mathbb{Q}_{p}}^{\times}).$$
The $p$-adic Hecke character $\Pi_{\iota}$ factors through $\mathbb{A}^{\times}_{F}/F^{\times}$ and we have $(\Pi_{\iota})_{v}=\iota^{-1}\circ\Pi_{v}$ for $v\nmid p,\infty$. For a finite place $v$ of $F$, since $\Pi_{v}$ factors through a discrete group, $(\Pi_{\iota})_{v}$ is continuous. On the other hand,  for an infinite place $v$, since $(\Pi_{\iota})_{v}$ factors through a finite group, $(\Pi_{\iota})_{v}$ is also continuous. It follows that $\Pi_{\iota}$ is a continuous character.  
\subsection{Algebraic Galois character} 
For an algebraic Hecke character $\Pi$ of weights 
$(a_{\tau})_{\tau}$ and a field  isomorphism $\iota:\overline{\mathbb{Q}}_{p}\simeq \mathbb{C}$, define the $p$-adic Galois character $R$ by 
$\Pi_{\iota}=R\circ \Art_{F}$. Denote its restriction to the Weil group of $F_{v}$ by $R_{v}$ and put $\Art_{v}=\Art_{F}\mid_{F_v^{\times}}$. It follows that we have $(\Pi_{\iota})_v=R_v\circ \Art_{v}$. Then, for 
$v\mid p$ and $\tau\in \Hom_{\mathbb{Q}_{p}}(F_{v},\overline{\mathbb{Q}_{p}})$, there exist $b_{\tau}:=a_{\iota\circ \tau}\in \mathbb{Z}$ and an open 
subgroup $U\subset \mathscr{O}^{\times}_{v}$ ($\mathscr{O}_{v}$: the ring of integers of $F_v$) such that we have
$$R_{v}\circ \Art_{v}(x_{v})=\prod_{\tau\in\Hom_{\mathbb{Q}_{p}}(F_{v},\overline{\mathbb{Q}_{p}})}\tau(x_{v})^{b_{\tau}}\quad (\forall x_{v}\in U).$$
Note that, since the finite component of $\Pi:\mathbb{A}_{F}^{\times}/F^{\times}\rightarrow \mathbb{C}^{\times}$ is a character of a locally profinite group, the finite component of $\Pi$ is trivial on an open subgroup $\mathbb{U}$ 
of $\hat{\mathscr{O}}_{F}^{\times}:=(\mathscr{O}_{F}\otimes_{\mathbb{Z}}\hat{\mathbb{Z}})^{\times}$ ($\mathscr{O}_{F}$: the ring of integers of $F$).
We say that such a $p$-adic Galois character $R$ is an {\bf algebraic Galois character} and that the set of 
integers $(b_{\tau})_{\tau}$ (the number of this set is $[F:\mathbb{Q}]$) is the Hodge-Tate weights of $R$. 
Conversely, we can obtain the algebraic Hecke character $\Pi$ from the algebraic Galois character $R$ by reversing the procedure. 
In particular, it follows from the global class field theory that the  algebraic Galois character of Hodge-Tate weights $(0)_{\tau}$ factors through a finite quotient since the corresponding algebraic Hecke character $\Pi$ satisfies $\Pi_{\mid F_{\infty}^{\times 0}}=1$.
\section{Elliptic curve and class field theory}
\subsection{Preliminary} 
Let $E/K$ be an elliptic curve with complex multiplication over an imaginary quadratic field $K$. Choose an algebraic closure $\overline{K}$ of $K$ and consider the absolute Galois group $G_{K}=\Gal(\overline{K}/K)$. For the Tate module $\mathbb{V}_p(E)$ over $\mathbb{Q}_{p}$,  put 
$V_{p}(E)=\mathbb{V}_{p}(E)$ if $p$ splits in $K$ (resp. $V_{p}(E)=\mathbb{V}_{p}(E)\otimes_{\mathbb{Q}_p}K_p$ otherwise). 
Then, by the theory of Lubin-Tate, we have a splitting $ V_{p}(E)=V^{(1)}\oplus V^{(2)}$ of $G_{K}$-modules where $V^{(i)}$ is a $1$-dimensional vector space. 
\begin{lem}
With notations as above,  $V^{(i)}$ has the Hodge-Tate weight $0$ or $1$. 
\end{lem}
\begin{proof} Since the weight of an algebraic Hecke character (hence Hodge-Tate weight) does not depend on the choice of prime $p$, we may assume that the prime $p$ inerts in $K$. In this proof, we consider the tensor products over $K_p$.  Let $\mathbb{C}_p$ denote the $p$-adic completion of 
an algebraic closure of $\mathbb{Q}_{p}$. Since, by the comparison theorem of the $p$-adic Hodge theory ([T], p.180, Cor.2), we have $\mathbb{C}_p\otimes V_{p}(E)\simeq \mathbb{C}_p\oplus \mathbb{C}_p(-1)$, it suffices to show that we have  
$\mathbb{C}_p\otimes V^{(1)}\simeq \mathbb{C}_p$, $\mathbb{C}_p\otimes V^{(2)}\simeq \mathbb{C}_p(-1)$ or $\mathbb{C}_p\otimes V^{(1)}\simeq \mathbb{C}_p(-1)$, $\mathbb{C}_p\otimes V^{(2)}\simeq \mathbb{C}_p$. 
Let $\alpha$ (resp. $\beta$) denote an element of $V^{(1)}$ (resp. $V^{(2)}$). 
Put
$$(*)\qquad g(\alpha)=s\alpha\quad \text{ and}\quad g(\beta)=t\beta\quad (g\in G_{K_{p}}, \ s,t\in K_{p}^{\times}).$$ 
On the other hand, since $1\otimes \alpha$ and $1\otimes \beta$ are elements of $\mathbb{C}_{p}\otimes V_{p}\simeq \mathbb{C}_{p}\oplus \mathbb{C}_{p}(-1)$, we can write
$$(**)\qquad1\otimes \alpha=a {\bf 1}+b {\bf T} \quad \text{and}\quad1\otimes \beta=c {\bf 1}+d {\bf T}\quad (a,b,c,d\in \mathbb{C}_{p})$$ 
where ${\bf 1}$ (resp. ${\bf T}$) denotes a basis of $\mathbb{C}_{p}$ (resp. $\mathbb{C}_{p}(-1)$) such that we have  $g({\bf 1})={\bf 1}$ and $g({\bf T})=\chi(g){\bf T}$ ($g\in G_{K_{p}}$, $\chi$ is the cyclotomic character). From the presentation of $(*)$ and $(**)$, we have
$$g(a) {\bf 1}+g(b) \chi(g){\bf T}=s(a {\bf 1}+b {\bf T})\ \text{and}\ g(c) {\bf 1}+g(d) \chi(g){\bf T}=t(c {\bf 1}+d{\bf T})$$ 
and it follows that we obtain 
$$g(a)=sa,\ g(b)\chi(g)=sb\quad \text{and} \quad g(c)=tc,\ g(d)\chi(g)=td.$$
Assume that we have $b\not=0$ and then we have $g(ab^{-1})=\chi(g)ab^{-1}$. Since such an element exists in $\mathbb{C}_p$ if and only if  $ab^{-1}=0$, this means that we have $a=0$. 
Therefore, we can obtain $a=0$ or $b=0$ (similarly $c=0$ or $d=0$) and it follows from $(**)$ that $V^{(i)}$ has the Hodge-Tate weight $0$ or $1$.
\end{proof}
\begin{rem} With notations as in the proof of Lemma 3.1, we show that each Hodge-Tate weight does not depend on the embeddings $K_p\hookrightarrow \mathbb{C}_p$.  Let $h:K_p\hookrightarrow \mathbb{C}_p$ be the other embedding and $\mathbb{C}_{p}\otimes_{K_p}V_{p}(E)$ is equipped with the action of $\Gal(K_p/\mathbb{Q}_p)$ by $\mathbb{C}_{p}\otimes_{h(K_p)}V_{p}(E)$. 
Put $H=\Gal(\overline{K_p}/\mathbb{Q}_{p}^{\ab})$ and $\Gamma=G_{K_{p}}/H$. Then, $D:=(\mathbb{C}_p\otimes V_p(E))^{H}=\mathbb{C}_p^H\oplus \mathbb{C}_p^{H}(-1)$ is equipped with the action of $\Gamma$. Since $h\in \Gal(K_p/\mathbb{Q}_p)=\Gal(\mathbb{Q}_p^{\ab}/\mathbb{Q}_p)/\Gamma$ commutes with the action of $\Gamma$, this induces the $\Gamma$-equivariant map 
$$h: D\rightarrow h(D)\  (\subset \mathbb{C}_p\otimes V_p(E)).$$
Then, we can write 
$$(***)\qquad h({\bf 1})=k{\bf 1}+l{\bf T}\quad\text{and} \quad h({\bf T})=m{\bf 1}+n{\bf T}\quad (k,l,m,n\in\mathbb{C}_{p}).$$
On the other hand, for an element $g\in \Gamma=\Gal(\mathbb{Q}_{p}^{\ab}/K_{p})$, we have   

1). $gh({\bf 1})=g(k){\bf 1}+g(l)\chi(g){\bf T}=k{\bf 1}+l{\bf T}=hg({\bf 1})$, 

2). $gh({\bf T})=g(m){\bf 1}+g(n)\chi(g){\bf T}=\chi(g)m{\bf 1}+\chi(g)n{\bf T}=hg({\bf T})$.    

Thus, we obtain $g(l)=\chi(g)^{-1}l$ and $g(m)=\chi(g)m$ and such elements $l,m$ exist in $\mathbb{C}_p$ if and only if  $l=m=0$. It follows from $(***)$ that we have $h(\mathbb{C}_p)=\mathbb{C}_p$ and $h(\mathbb{C}_p(-1))=\mathbb{C}_p(-1)$ as the action of $h$ on $\mathbb{C}_p\otimes V_{p}(E)= \mathbb{C}_p\oplus \mathbb{C}_p(-1)$.  
Therefore, each Hodge-Tate weight does  not depend on the embeddings $K_p\hookrightarrow \mathbb{C}_p$. This means that the Hodge-Tate weights of   $V^{(i)}$ as a $\mathbb{Q}_p$-representation are $(0,0)$ or $(1,1)$.     
\end{rem}
From now on, fix the notation such that  $V^{(1)}$ (resp. $V^{(2)}$) has the Hodge-Tate weight $0$ (resp. $1$).  
The following is the key lemma which connects the unit elements of the rings of integers of number fields and rational points on elliptic curves.   
\begin{lem}
In the sense of $\S$2, $V^{(1)}$ (resp. $V^{(2)}$) has the Hodge-Tate weight $(0)_{\tau}$ (resp. $(1)_{\tau}$).
\end{lem}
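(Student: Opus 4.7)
The plan is to combine Lemma~2.1 and Remark~2.2 with the classical local algebraicity theorem for one-dimensional Hodge--Tate characters to identify $V^{(1)}$ and $V^{(2)}$ as algebraic Galois characters in the sense of $\S 1$ with the claimed weights.

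First, each $V^{(i)}$ is a one-dimensional $p$-adic character $R^{(i)} : G_K \to \overline{\mathbb{Q}_p}^{\times}$, so $R^{(i)}$ factors through $G_K^{\ab}$, and the $p$-adic class field correspondence recalled in $\S 1$ produces a $p$-adic Hecke character $\Pi^{(i)}$ with $\Pi^{(i)} = R^{(i)} \circ \Art_K$. It remains to verify that $\Pi^{(i)}$ is algebraic in the $\S 1$ sense and to compute its weights. Lemma~2.1 together with Remark~2.2 shows that at every embedding $\tau \in \Hom_{\mathbb{Q}_p}(K_p, \overline{\mathbb{Q}_p})$ the Hodge--Tate--Sen weight of $R^{(i)}|_{G_{K_p}}$ is uniformly $0$ for $V^{(1)}$ and uniformly $1$ for $V^{(2)}$: the ``mixed'' configurations $(0,1)$ and $(1,0)$ are precisely what the Tate-type vanishing argument at the end of Remark~2.2 rules out.

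To translate this Hodge--Tate information into the $\S 1$ formalism, which requires the local multiplicative identity $R_v \circ \Art_v(x) = \prod_\tau \tau(x)^{b_\tau}$ on an open subgroup $U \subset \mathcal{O}^{\times}_{v}$, I would invoke the classical local algebraicity theorem: a one-dimensional Hodge--Tate character of $G_{K_p}$ with integer weights $(b_\tau)_\tau$ agrees, on some open subgroup of $\mathcal{O}^{\times}_{K_p}$ via $\Art_{K_p}$, with $\prod_\tau \tau(\cdot)^{b_\tau}$. Applied with the weights $b_\tau = 0$ (resp.\ $b_\tau = 1$) for all $\tau$ coming from the previous step, this produces an open subgroup on which $R^{(1)} \circ \Art$ is trivial (resp.\ $R^{(2)} \circ \Art$ coincides with $N_{K_p/\mathbb{Q}_p}$), and exhibits each $R^{(i)}$ as algebraic in the $\S 1$ sense with Hodge--Tate weights $(0)_{\tau}$ and $(1)_{\tau}$ respectively. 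I expect the main obstacle to be precisely this bridging step: Lemma~2.1 and Remark~2.2 provide only the $\mathbb{C}_p$-semilinear Hodge--Tate decomposition of $V^{(i)}$, while $\S 1$ demands an explicit multiplicative identity on local units, and passing between the two rests on Tate's computation of $H^0(K_p, \mathbb{C}_p(n))$ together with Sen's theorem that a $p$-adic character with integer Hodge--Tate--Sen weights is locally algebraic.
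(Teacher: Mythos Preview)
Your proposal is correct and follows essentially the same route as the paper: reduce to a single prime (the paper takes $p$ inert), use Lemma~2.1 and Remark~2.2 to pin down the Hodge--Tate weights as $(0,0)$ and $(1,1)$ for the two embeddings, and then invoke the local algebraicity theorem for one-dimensional Hodge--Tate characters to obtain the $\S1$ identity on an open subgroup of $\mathscr{O}_p^\times$. The only cosmetic difference is that the paper cites Fontaine \text{[F], p.143, 3.9.iv} for the bridging step you attribute to Tate and Sen, and writes the resulting formula explicitly as $\sigma^{(2)}_p\circ\Art_p(x)=N_{K_p/\mathbb{Q}_p}(x)=x\tau(x)$.
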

\begin{proof} As in the proof of Lemma 3.1, we may assume that the prime $p$ inerts in $K$ and we consider the Tate module $V_{p}(E)$ over $K_p$. First of all, note that we have the one-to-one correspondence between the algebraic Hecke character (the absolute value) $\mid\cdot\mid:\mathbb{A}_{\mathbb{Q}}^{\times}/\mathbb{Q}^{\times}\rightarrow \mathbb{R}^{\times}_{>0}\subset\mathbb{C}^{\times}$ of weight $1$ and the algebraic Galois character (the cyclotomic character) $\chi:G_{\mathbb{Q}}\rightarrow  \overline{\mathbb{Q}_{p}}^{\times}$ of Hodge-Tate weight $1$ in the sense of $\S$2. Further, by the base change for a number field $F/\mathbb{Q}$, we also  have the one-to-one correspondence between $\mid\cdot\mid\circ N_{F/\mathbb{Q}}$ and $\chi_{\mid F}$ where $N_{F/\mathbb{Q}}$ denotes the norm map from $\mathbb{A}_{F}^{\times}$ to $\mathbb{A}_{\mathbb{Q}}^{\times}$. Let $\sigma^{(i)}:G_{K}\rightarrow \Aut(V^{(i)})$ denote the Galois character obtained from the action of $G_{K}$ on $V_{p}(E)$ and let $\sigma^{(i)}_{p}$ be its restriction to the Weil group of $K_{p}$. Denote the ring of integers of $K_p$ by $\mathscr{O}_{p}$. Since $V^{(1)}$ (resp. $V^{(2)}$) has the Hodge-Tate weight $0$ (resp. $1$), there exists an open subgroup $U$ of  $\mathscr{O}_{p}^{\times}$ such that we have ([F], p.143, 3.9.iv) 
$$\sigma^{(1)}_{p}\circ \Art_{p}(x)=N_{K_p/\mathbb{Q}_p}(1)=1 \ \text{and} \ \sigma^{(2)}_{p}\circ \Art_{p}(x)=N_{K_p/\mathbb{Q}_p}(x)=x\tau(x)\quad (\forall x\in U)$$
where $N_{K_p/\mathbb{Q}_p}$ denotes the norm map from $K_p$ to $\mathbb{Q}_p$ and $1\not=\tau\in\Hom_{\mathbb{Q}_{p}}(K_p,\overline{\mathbb{Q}_{p}})$ (see $\S$2). This means that $V^{(1)}$ (resp. $V^{(2)}$) has the Hodge-Tate weight $(0)_{\tau}$ (resp. $(1)_{\tau}$).
\end{proof}
\subsection{Global class field theory and Hecke character}
Let $\sigma^{(i)}:G_{K}\rightarrow \Aut(V^{(i)})$ ($i=1,2$) denote the Galois character obtained from the action of $G_{K}$ on $V_{p}(E)$. 
By the theory of complex multiplication ([S], \Rnum{2}.10.5), there exists a Hecke character $\omega_{K}$ over $K$ such that, for a good prime $v\nmid p$, the prime element $\wp_v$ of $K$ and the Frobenius element $\Frob_v$ are related by
$$(\sharp)\quad \omega_{K,v}(\wp_v)=\iota\circ \sigma^{(1)}_v(\Frob_v)\quad  \text{and}\quad \overline{\omega_{K,v}}(\wp_v)=\iota\circ \sigma^{(2)}_v(\Frob_v)$$
for a field isomorphism $\iota:\overline{\mathbb{Q}}_{p}\simeq \mathbb{C}$. 
\begin{rem}
Since the both sides are connected just by a field isomorphism $\iota:\overline{\mathbb{Q}}_{p}\simeq \mathbb{C}$, this does not mean that the effect of the complex conjugation changes  $\sigma^{(1)}$ to $\sigma^{(2)}$ as a continuous character and we shall normalize the both sides of $(\sharp)$ to obtain the continuous Hecke and Galois characters (see $\S$2). 
\end{rem} 
Let us denote $\omega^{(1)}_{K}=\omega_{K}$ and $\omega^{(2)}_{K}=\overline{\omega_{K}}$. Since $V^{(1)}$ (resp. $V^{(2)}(1)$) has the Hodge-Tate weight $(0)_{\tau}$, by making the infinite component of $\omega^{(1)}_{K}$ (resp. $\omega^{(2)}_{K}(1)$) trivial, we can obtain the {\bf algebraic Hecke character} $\omega^{(1)}_{K,a}$ (resp. $\omega^{(2)}_{K,a}(1)$) of the weight $(0)_{\iota\circ\tau}$. Conversely, the Galois character which corresponds to the algebraic Hecke character $\omega^{(1)}_{K,a}$ (resp. $\omega^{(2)}_{K,a}(1)$) becomes the {\bf algebraic Galois character} $\sigma^{(1)}_{a}$ (resp. $\sigma^{(2)}_{a}(1)$) of the Hodge-Tate weight $(0)_{\tau}$ and factors through a finite abelian extension $K_{1}/K$ (resp. $K_{2}/K$) by the global class field theory. In this situation, the effect of the complex conjugation changes $\sigma^{(1)}_{a}$ (resp.  $\omega^{(1)}_{K,a}$) to  $\sigma^{(2)}_{a}(1)$ (resp. $\omega^{(2)}_{K,a}(1)$) as a continuous character.
\begin{rem}
Note that these algebraic characters are different from the former ones up to the normalization and that the former characters never factor through finite quotients since the Galois actions on the Tate module $V_{p}(E)$ are of infinite order. 
\end{rem}
\begin{ex} Consider the elliptic curve $y^2=x^3-Dx$ ($D\in\mathbb{Z}\backslash \{ 0\}$) which has the complex multiplication by $K=\mathbb{Q}(i)$. Let $p$ denote the prime ideal of $\mathbb{Z}$ such that $p$ does not divide $2D$.  By making the infinite components of the Hecke characters trivial, we shall obtain the algebraic Hecke characters. 

a). In the case of $p\equiv 3\ (\mod \ 4)$, 
we have $\omega^{(1)}_{K}(p)=-p$ and we normalize this character as $\omega^{(1)}_{K,a}(p)=-1$. Then, its normalized complex conjugation  $\omega^{(2)}_{K,a}(1)(p)$ is also given by $\omega^{(2)}_{K,a}(1)(p)=-1$. 

b). Consider the case of $p\equiv 1\ (\mod \ 4)$ and then the prime $p$ decomposes as 
$p=\pi\bar{\pi}$ where $\pi$ denotes the element of $\mathbb{Z}[i]$ such that we have  $\pi\equiv 1\ (\mod \ 2+2i).$ Let $\wp$ denote the prime ideal $\wp=(\pi)$ in $\mathbb{Z}[i]$. It is known that we have
$\omega^{(1)}_{K}(\wp)=\overline{\bigl(\frac{D}{\pi}\bigr)}_{4}\pi$ or $\bigl(\frac{D}{\pi}\bigr)_{4}\bar{\pi}$ (depending on the choice of $\iota:\overline{\mathbb{Q}}_{p}\simeq \mathbb{C}$) where $\bigl(\frac{\cdot}{\pi}\bigr)_{4}$ denotes the $4^{\text{th}}$-power residue symbol. 
Assume that we have $\omega^{(1)}_{K}(\wp)=\overline{\bigl(\frac{D}{\pi}\bigr)}_{4}\pi$. 
In this case, we normalize this character as $\omega^{(1)}_{K,a}(\wp)=\overline{\bigl(\frac{D}{\pi}\bigr)}_{4}$ and its normalized complex conjugation $\omega^{(2)}_{K,a}(1)(\wp)$ is given by  $\omega^{(2)}_{K,a}(1)(\wp)=\bigl(\frac{D}{\pi}\bigr)_{4}$. 

Since one of  these local eigenvalues of normalized Hecke characters is a root of unity (i.e. weight $0$), this forces the normalized Hecke $L$-function to be of weight $0$ and thus the normalized Hecke characters factor through global finite Galois extensions.  
\end{ex}
\subsection{Hecke $L$-function and Dedekind zeta function}   
Since the Galois character $\sigma^{(1)}_{a}$  is non-trivial, we can show that $L(\omega^{(1)}_{K,a},s)$ does not have any poles or zeros at $s=1$. Furthermore, we can deduce that the order of $L(  \omega^{(2)}_{K,a}(1),s)\mid_{s=0}$ is the same as that of Dedekind zeta function $\zeta_{K_{2}}(s)\mid_{s=0}$ ([N], p.502, (8.5)).
\section{Rational points and unit elements}
Choose a prime $p$ which inerts in $K$ such that the extensions $K_{1}/K$ and $K_{2}/K$ are of degree prime to $p$. Since $E$ has the complex multiplication by $K$ and the prime $p$ inerts in $K$, the Tate module $\mathbb{V}_p(E)$  over $\mathbb{Q}_{p}$ is a one dimensional $K_p$-vector space. Further, recall that  $V_p(E)$ over $K_p$ splits as $ V_{p}(E)=V^{(1)}\oplus V^{(2)}$. The assumption that the extensions $K_{1}/K$ and $K_{2}/K$ are of degree prime to $p$ is used in ([Ru], p.24-27).
\subsection{Top exact sequence}
Let $T^{(1)}$ denote a free of  rank $1$ module over $\mathscr{O}_{p}$ (the ring of integers of $K_{p}$) on which $G_{K}$ acts via $\sigma^{(1)}_{a}$  and put $W^{(1)}=(T^{(1)}\otimes \mathbb{Q})/T^{(1)}$. Then, it is known that we have $$\Sel(K,W^{(1)})\simeq \Hom(A_{K_{1}}, W^{(1)})^{\Gal(K_{1}/K)}$$
where $A_{L}$ denotes the ideal class group of $L$ ([Ru], p.24). Thus, we have an exact sequence  $0 \rightarrow \Ker (f) \rightarrow \Sel (K, W^{(1)}) \xrightarrow{f} \Sha(K, W^{(1)})_{p^{\infty}} \rightarrow 0$ of finite torsion modules over $K_p/\mathscr{O}_{p}$. 
\subsection{Middle exact sequence}
In $\S$3.2, for the Galois representation $\sigma_p:=\sigma^{(1)}\oplus \sigma^{(2)}$ of $G_{K}$ on $V_{p}(E)$ over $K_p$, we obtain the normalized Galois representation $\sigma_{p,a}:=\sigma^{(1)}_{a}\oplus \sigma^{(2)}_{a}$. Write the same notations for their restrictions to $\mathbb{V}_p(E)$. Since the Tate module $\mathbb{V}_p(E)$ over $\mathbb{Q}_{p}$ is a one dimensional $K_p$-vector space, we can write $\sigma_{p,a}=\sigma_{p}\cdot \psi_{p}$ for some character $\psi_{p}$ of $G_{K}$ over $K_{p}$. On the other hand, since we have $\End(E)\otimes \mathbb{Q}_{p}\simeq \End(\mathbb{V}_{p}(E)):\sigma\otimes 1\mapsto \sigma_{p}$ by the isogeny theorem of Faltings, there exists an element $\psi\in \End(E)$ such that we have $\End(E(\psi))\otimes \mathbb{Q}_{p}\simeq \End(\mathbb{V}_{p}(E)(\psi_p)):(\sigma\cdot\psi)\otimes 1\mapsto \sigma_{p}\cdot\psi_{p}=\sigma_{p,a}$.
\subsection{Bottom exact sequence}
Let $T^{(2)}$ denote a free of  rank $1$ module over $\mathscr{O}_{p}$ on which $G_{K}$ acts via $\sigma^{(2)}_a$  and put $W^{(2)}=(T^{(2)}\otimes \mathbb{Q})/T^{(2)}$. The following notation is used in ([Ru], p25). 

$\underline{\text{Notation}}$: For a $\mathbb{Z}[G_K]$-module $B$, we define the $\chi$-component of $B$ by 
$$B^{\chi}=\{b\in (B\otimes_{\mathbb{Z}}\mathbb{Z}_p)\otimes_{\mathbb{Z}_p}\mathscr{O}_p\mid 
g(b)=\chi(g)b\ (\forall g\in G_{K})\}.$$
With this notation, we have 
an exact sequence of  $K_p/\mathscr{O}_{p}$-modules ([Ru], p.26)
$$0 \rightarrow  (\mathscr{O}^{\times}_{K_2}\otimes \mathbb{Q}_{p}/\mathbb{Z}_{p})^{(\sigma^{(2)}_a(1))^{-1}}  \rightarrow \Sel (K, W^{(2)}) \rightarrow (A_{K_{2}})^{( \sigma^{(2)}_a (1))^{-1}}\rightarrow 0.$$
\subsection{Commutative diagram}
\begin{lem} We have $H^{0}(G_K,W^{(2)})=0$ and $H^{2}(G_K,W^{(1)})=0$.
\end{lem}
\begin{proof}Since $G_{K}$ acts on $W^{(2)}$ via  the character $\sigma^{(2)}_a$, the $G_K$-invariant part of $W^{(2)}$ is $0$ and thus  we obtain $H^{0}(G_K,W^{(2)})=0$. As for $H^{2}(G_K,W^{(1)})$, since $G_{K}$ acts on $W^{(1)}$ via the finite character $\sigma^{(1)}_a$, we may assume that the extension $K_1/K$ is the finite cyclic extension and we obtain $H^{2}(G_K,W^{(1)})=H^{2}(\Gal(K_1/K),W^{(1)})=(W^{(1)})^{\Gal(K_1/K)}/N_{\Gal(K_1/K)}(W^{(1)})=0$ where $N_G$ is the norm map $N_G(a)=\sum_{g\in G} ga$.
\end{proof}
With this lemma, we have the vanishing of the cohomology groups $\flat$ and $\natural$ below and thus we obtain 
the following commutative diagram of $K_p/\mathscr{O}_{p}$-modules where all of the horizontal and vertical lines are exact.   
{\tiny \begin{align*}
\begin{CD}
@. @. @. \flat=0 \\
@. @. @. @VVV \\
0 @>>> \Ker (f) @>>> \Sel (K, W^{(1)}) @> f>> \Sha(K, W^{(1)})_{p^{\infty}} @>>> 0 \\
@. @VVV @VVV @VVV \\
0 @>>> E(\psi)^{G_{K}}\otimes K_p/\mathscr{O}_{p} @>>> \Sel (K, E(\psi)_{p^{\infty}}) @>>> \Sha(K,E(\psi))_{p^{\infty}} @>>> 0 \\
@. @VVV @VVV @VVV \\
0 @>>> (\mathscr{O}^{\times}_{K_2}\otimes \mathbb{Q}_{p}/\mathbb{Z}_{p})^{(\sigma^{(2)}_a(1))^{-1}} @>>> \Sel (K, W^{(2)}) @>>> (A_{K_{2}})^{( \sigma^{(2)}_a(1))^{-1}} @>>> 0. \\
@. @. @VVV @.\\
@. @. \natural=0 @.\\
\end{CD}
\end{align*}}
\begin{lem}
If  we have $\ord_{s=1} L(E(\psi)_{\slash K},s)=\rank (E(\psi)^{G_{K}})$, this leads to  $\ord_{s=1} L(E_{\slash K},s)=\rank(E(K))$.
\end{lem} 
\begin{proof}
This follows from ([Ro], p.127) by reversing the procedure and using the fact that all conjugates 
$E(\psi)(\psi^{-1}_c)\simeq c^{-1}(E(\psi)(\psi^{-1}_c))= c^{-1}(E(\psi))(\psi^{-1})\simeq E(\psi)(\psi^{-1})=E \ (c\in\Aut(\mathbb{C}/\mathbb{Q}))$ give the same $L$-functions.
\end{proof}
\begin{lem} The rank of $\mathscr{O}^{\times}_{K_2}$ modulo torsions over $\mathbb{Z}$ is equal to that of $(\mathscr{O}^{\times}_{K_2}\otimes \mathbb{Q}_{p}/\mathbb{Z}_{p})^{(\sigma^{(2)}_a(1))^{-1}}$ over $K_p/\mathscr{O}_p$ (see the notation in $\S4.3$). 
\end{lem} 
\begin{proof}
For simplicity, write $\chi=\sigma^{(2)}_a(1)$ and $B=(\mathscr{O}^{\times}_{K_2}\otimes \mathbb{Q}_{p}/\mathbb{Z}_{p})\otimes_{\mathbb{Z}_p}\mathscr{O}_p$. Since $K_2$ is the fixed field of the kernel of the finite character $\chi$, we may assume that the extension $K_{2}/K$ is the finite cyclic extension. Thus, we can write $B=\oplus B^{\chi^i}$ ([Ru], p.25). If $n$ denotes the rank of $B$ over $K_p/\mathscr{O}_p$, all components $B^{\chi^i}$ have the same rank $n$ over $K_p/\mathscr{O}_p$; otherwise the character $\chi$ factors through a smaller field. 
\end{proof}
\subsection{Main results}
\begin{thm}
For an elliptic curve $E/K$ with complex multiplication over an imaginary quadratic field $K$, we have
$$\ord_{s=1} L(E_{\slash K},s)=\rank(E(K)).$$
\end{thm}
\begin{proof}
As for the left vertical in the commutative diagram above, it follows from the snake lemma that the free rank of $E(\psi)^{G_K}$ is equal to that of the unit group 
$\mathscr{O}_{K_2}^{\times}$ (Lemma 4.3). Thus, we have $\ord_{s=1} L(E(\psi)_{\slash K},s)=\rank (E(\psi)^{G_{K}})$ ($\S3.3$) and this leads to $\ord_{s=1} L(E_{\slash K},s)=\rank(E(K))$ 
(Lemma 4.2).
\end{proof}
In particular, if $E$ is defined over $\mathbb{Q}$,   
since we have $L(E_{\slash K},s)=L(E_{\slash\mathbb{Q}},s)^2$ and $\rank(E(K))=2\cdot \rank(E(\mathbb{Q}))$ ([M], $\S$1), 
we obtain $\ord_{s=1} L(E_{\slash\mathbb{Q}},s)=\rank(E(\mathbb{Q}))$. Therefore, it follows that we can determine whether a given natural number $N$ is a congruent number by the criterion of Tunnell for the equation $E_N:y^2=x^3-N^2x$.


\begin{thebibliography}{GNA}

\bibitem[F]{Fontaine} Fontaine, J-M.:{\it Repr\'esentations $p$-adiques semi-stables.} P\'eriodes $p$-adiques (Bures-sur-Yvette, 1988). Ast\'erisque No. 223 (1994), 113--184. 



\bibitem[M]{Milne} Milne, J-S.:{\it On the arithmetic of abelian varieties.} Invent. Math. 17 (1972) 177-190.

\bibitem[N]{N} Neukirch, J.:{\it Algebraic number theory.} Grundlehren der Mathematischen Wissenschaften 322. Springer-Verlag, Berlin, 1999.
xviii+571 pp.



\bibitem[S]{S} Silverman, J-H.:{\it Advanced Topics in the Arithmetic of Elliptic Curves.} volume 151 of Graduate Texts in Mathematics. Springer-Verlag, New York, 1994.

\bibitem[T]{T} Tate, J.: {\it p-Divisible Groups, Proceedings of a Conference on Local Fields.} pp 158-183.

\bibitem[Ro]{Ro} Rohrlich, D-E.:{\it The vanishing of certain Rankin-Selberg convolutions.} Automorphic Forms
and Analytic Number Theory, Les publications CRM, Montreal, 1990, pp. 123-133.

\bibitem[Ru]{Ru} Rubin, K.:{\it Euler systems.} Annals of Mathematics Studies, 147. Hermann Weyl Lectures. The Institute for Advanced Study.
Princeton University Press, Princeton, NJ, 2000. xii+227 pp.

\bibitem[Y]{Y} Yoshida, T.:{\it Automorphic representations and Galois representations (in Japanese).} Proceedings of the 17th Number Theory Summer School, Kyoto, 2009.



\end{thebibliography}
\end{document}